\newtheorem{theorem}{Theorem}
\newtheorem{lemma}[theorem]{Lemma}
\newtheorem{corollary}[theorem]{Corollary}
\newtheorem{prop}[theorem]{Proposition}
\def\N{\mathbb{N}}
\def\S{\mathscr{S}}
\def\E{\mathscr{E}}
\def\Y{\mathsf{Y}}
\def\Exp{\mathbb{E}}
\def\one{\mathbbm{1}}
\begin{document}

\author{Daniel Larsen}
\title{Robust additive bases without minimal subbases}
\email{dlarsen@mit.edu}

\author{Michael Larsen}
\email{mjlarsen@iu.edu}
\thanks{ML was partially supported by NSF grant DMS-2401098 and the Simons Foundation.}

\begin{abstract}
There exists a set $A$ of positive integers such that the number of representations of a large positive integer $m$ as a sum of two elements of $A$ grows 
with a lower bound of order $\log m$, but for which there is no subset $D$ of $A$ minimal for the property that $D+D$ contains all sufficiently large positive integers.
\end{abstract}

\maketitle

\section{Introduction}

Let $\N$ denote the positive integers and $A\subset \N$ be any subset. We write $r_A(m)$ for the number of pairs $(a,b)\in A^2$ such that $a+b=m$ and $a\le b$.
We say such pairs give \emph{representations} of $m$.
We say $A$ is an \emph{additive basis} (of order $2$, always in this paper) if $r_A(m)>0$ for all $m$ sufficiently large.
An additive basis is \emph{minimal} if it contains no proper additive subbasis, i.e. no proper subset which is an additive basis in its own right.
Given $A$, a \emph{summand} of $m$ is an element $a\in A$ such that $m-a\in A$ as well. (A set other than $A$ may be specified.)

Erd\H{o}s and Nathanson \cite{EN79} proved that if $c>\log\bigl(\frac43\bigr)^{-1}$ and $r_A(m) > c\log m$ for all sufficiently large $m$, then $A$ must contain a minimal subbasis.
They conjectured that this is not true for all positive $c$. We prove this conjecture.

\begin{theorem}
\label{main}
There exists $\varepsilon>0$ and a set $A\subset \mathbb N$ such that:
\begin{enumerate}
\item For all sufficiently large $m$, $r_A(m)>\varepsilon\log m$,
\item $A$ contains no minimal additive subbasis of order $2$.
\end{enumerate}
\end{theorem}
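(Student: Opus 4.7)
The plan is to build $A$ as a disjoint union of finite blocks $A = \bigsqcup_{k \ge 1} A_k$, each $A_k$ supported near a rapidly increasing scale $N_k$, with two features. First, the blocks are placed so that for every large $m$ there are $\Omega(\log m)$ pairs $(j,k)$ with $m \in A_j + A_k$, giving $r_A(m) \ge \varepsilon \log m$. Second, the internal design of each $A_k$ is supposed to force any subbasis $B \subseteq A$ to inherit enough local redundancy that at least one element of $B$ is always removable, in the sense that $B \setminus \{b\}$ is still an additive basis.

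To establish (1), I would arrange the $A_k$ (for instance as short arithmetic progressions around $N_k$, or as pairs of carefully placed points) so that the representation count of each large $m$ accumulates $\Omega(\log m)$ contributions from distinct pairs $(j,k)$; summing these contributions gives the logarithmic lower bound. Depending on the construction, the bookkeeping may be cleanest if one fixes a ``base'' block and takes $A_j + A_k$-representations with $j$ bounded and $k$ ranging.

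For (2), the starting observation is elementary: a subbasis $B \subseteq A$ is minimal only if $r_B(m) = 1$ for infinitely many $m$. Indeed, any fixed $b \in B$ can appear in at most one representation of any given $m$ (namely $b + (m-b)$, when $m-b \in B$), so if $r_B(m) \ge 2$ for all sufficiently large $m$ then deleting any $b$ still leaves a basis. It therefore suffices to produce, for each subbasis $B$, some removable element. My plan is to leverage the block structure: $B$ must retain enough of infinitely many $A_k$ in order to still cover the large integers, and the internal design of those blocks should force $B \cap A_k$ to retain enough of the original redundancy that some element is removable in $B$.

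The principal obstacle is establishing (2) for an \emph{arbitrary} subbasis $B$. A priori, $B$ could be chosen adversarially so that every $b \in B$ is ``pinned'' by some $m$ whose only $B$-representation goes through it; ruling this out simultaneously for all $b \in B$ is the heart of the problem. The key combinatorial ingredient ought to be a lemma asserting that the local overdetermination inside each block $A_k$ cannot be entirely destroyed when passing to a subbasis, so that at every sufficiently large scale $k$ the set $B \cap A_k$ already carries an intrinsic removable element at scale $k$. Combined with a pigeonhole or compactness argument over the infinitely many blocks, such a lemma would then promote a local redundancy to a globally removable element of $B$, completing the proof.
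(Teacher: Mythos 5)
Your proposal contains only the easy half of the argument. The observation that a set $D$ with $r_D(m)\ge 2$ for all large $m$ cannot be minimal is correct and is indeed used at the very end of the paper, and your general plan of blocks at rapidly increasing scales is compatible with the actual construction. But the entire difficulty of the theorem is concentrated in the step you defer to a ``key combinatorial ingredient [that] ought to be a lemma,'' and the lemma you wish for points in the wrong direction. A statement of the form ``the local overdetermination inside each block cannot be destroyed when passing to a subbasis'' cannot be the mechanism: the Erd\H{o}s--Nathanson theorem cited in the introduction shows that once $r_A(m)>c\log m$ for $c>\log(4/3)^{-1}$ a minimal subbasis \emph{must} exist, so redundancy certainly can be destroyed by subbases, and any successful construction has to permit this while making it costly. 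Your proposal gives no indication of how to arrange that cost.

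The paper's actual mechanism is a cross-scale coupling rather than a persistence-of-redundancy statement. For each element $c$ at scale $n-10$ with many representations $c=a_i+a_i'$, and for each set $S_c$ consisting of one element from all but one of these pairs (i.e., each way a subbasis could reduce $r(c)$ to at most $1$), the construction plants a designated ``canary'' $b\in B_n$ at scale $n$ whose \emph{only} representations in $A$ are $b=(b-s)+s$ with $s\in S_c$. Consequently, if a subbasis $D$ has $r_D(c)\le 1$, then $A\setminus D$ contains some $S_c$, and the corresponding $b$ satisfies $r_D(b)=0$, so $D$ is not a basis at all. Hence every subbasis $D$ automatically has $r_D(c)\ge 2$ for all large $c$ outside the sparse exceptional set $B=\bigcup_n B_n$, and the elements of $B$ are handled separately because all of their summands tend to infinity, so deleting one fixed element of $D$ affects only finitely many of them. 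Making this work also requires substantial probabilistic input absent from your proposal: a random model with density $\asymp\sqrt{\log x/x}$ and Chernoff bounds to get $r_A(m)\asymp\log m$, plus a count of solutions to systems $x_i+y_i=q$, $y_i+z_i=r$ to guarantee that removing the summands of $B_n$ changes other representation counts by at most an additive constant, and that the canaries acquire no unintended representations. Without the coupling between scales and these estimates, your outline does not constitute a proof.
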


Let $X_n=2^{2^n}$, and let
$$I_n=[X_n,X_{n+1})\cap\mathbb N.$$
Henceforth, we will not bother writing $\N$ and will understand intervals to consist only of integers.
We will construct random subsets $A_n\subset I_n$ and then modify $A_n$, 
first by removing a small set of elements to obtain $A'_n$, and then by adding in another small set of  elements to obtain $A''_n$.
Let $A(n)$ (resp. $A''(n)$) be the union of $A_i$ (resp. $A''_i$) as $i$ goes from $1$ to $n$. 
Our sequence of approximations to the final set therefore looks like
$$\cdots \to A''(n-1)\to A''(n-1)\cup A_n\to A''(n-1)\cup A'_n\to A''(n-1)\cup A''_n=A''(n)\to \cdots$$

Let $A''$  be the union of all $A''_i$. 
Once we have determined $A''_n$, we no longer make any changes within $I_n$, so $A''(n)$ is the correct initial part of our final set $A''$.
With probability $1$, the set $A''$ will satisfy the conditions
(1) and (2).  

The idea for constructing $A'_n$ from $A_n$ is first to choose a small subset $B_n\subset I_n$ 
and then to remove from $A_n$  all elements which are summands of elements of $B_n$ with respect to 
$A''(n-1)\cup A_n$, in order to reduce the number of representations of each element of $B_n$ as a sum of two elements of $A''(n-1)\cup A'_n$ to zero. 
We then construct $A''_n$ (which then determines $A''(n)$) by adding some new elements to $A'_n$ in such a way that every element in
$b\in B_n$ has at least $\varepsilon\log b$ representations over $A''(n)$. Furthermore, this modification is carried out in such a way that, for large $n$ and
any subset $D\subset A''(n)$ such that $B_n\subset D+D$, we have almost surely that every element in $I_{n-10}\setminus B_{n-10}$ has at least two different representations as a sum in $D$,
and, moreover, the smallest summand of any element of $B_n$ in $D$ goes to infinity as $n\to \infty$. We will see that this implies that $D$ cannot be a minimal basis.

This strategy is based on \cite{EN89}, which proves that for any fixed $t$, there exists a set $A$ with $r_A(m)\ge t$ for all sufficiently large $m$ which nevertheless does not contain any minimal subbasis. One can think of our sets $B_n$ as being generalizations of their singletons $\{N_n\}$. The key trick of Erd\H{o}s and Nathanson is to arrange matters so that 
$\{N_1,N_2,\ldots\}$ serves as a ``canary in the coal mine,'' whose elements cease to be representable when the set $A$ is thinned, long before elements in its complement.
This makes the analysis of which subsets are subbases much more tractable. The cost of making the $N_n$ ``fragile'' is that $r_A(N_n)$ must be small (in their case bounded).
Because we are able to create many more fragile elements per generation, we are able to spread the load and make the representation count of elements in $B_n$
grow logarithmically (only smaller by a bounded factor than regular elements).

\section{Estimates for $A(n)$}

Let 
$$p(x) = \begin{cases}
\min\biggl(1,40\sqrt{\frac{\log x}x}\biggr) & \text{ if $x\ge 1$}\\
0&\text{ otherwise.}
\end{cases}$$
This is a log-convex function for $x$ sufficiently large.
Let $\Y_2,\Y_3,\Y_4 \ldots$ denote independent Bernoulli random variables such that 
$\Pr[\Y_m=1]=p(m)$
for $m\ge 2$.
We define $A_n = \{m\in I_n\mid \Y_m=1\}$ and $A(n) = A_1\cup\cdots\cup A_n$.
The $p(m)$ are chosen to ensure that
$r_A(m)$ has upper and lower bounds which are constant multiples of $\log m$ for all sufficiently large $m$.

We say a statement about $n$ is \emph{asymptotically true} if with probability $1$ it is true for all but finitely many values $n$.
For instance, we have the following:

\def\ext{\mathrm{ext}}
\begin{lemma}
\label{bounds}
Let $A_n^{\ext} = [X_n/4,X_{n+1})\cap A(n)$. It is asymptotically true that for all $m\in I_n$ we have 
$$r_{A_n^{\ext} }(m) > 160 \log m$$
and
$$r_{A(n)}(m) \ll \log m.$$
\end{lemma}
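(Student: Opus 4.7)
The plan is to bound the means $\Exp[r_{A_n^{\ext}}(m)]$ and $\Exp[r_{A(n)}(m)]$ directly, apply Chernoff concentration for each individual $m$, union-bound over $m \in I_n$, and close with Borel--Cantelli in $n$. The structural observation making Chernoff applicable is that, for any $A$ built from the $\Y_i$,
\[r_A(m) = \sum_{a<m/2}\Y_a\Y_{m-a} + \one[2\mid m]\,\Y_{m/2}\]
is a sum of mutually independent Bernoulli variables, since each $\Y_i$ appears in at most one of its summands.

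For the lower bound, I will restrict the sum to $a \in [X_n/4, m/2]$; on this range the cap in the definition of $p$ is inactive, so $p(a)p(m-a) = 1600\sqrt{(\log a)(\log(m-a))}/\sqrt{a(m-a)}$. The hypothesis $m \in I_n \subset [X_n, X_n^2)$ gives $\log X_n \ge \tfrac12 \log m$, so $\log a \ge \tfrac12\log m - O(1)$ and $\log(m-a) \ge \log m - \log 2$, hence $\sqrt{(\log a)(\log(m-a))} \ge (1-o(1))(\log m)/\sqrt{2}$. Approximating $\sum 1/\sqrt{a(m-a)}$ by the corresponding integral and substituting $a = mt$ gives $\int_{X_n/(4m)}^{1/2}dt/\sqrt{t(1-t)} = \pi/2 - 2\arcsin\sqrt{X_n/(4m)}$, which is at least $\pi/6$ throughout $I_n$ since $X_n/(4m) \le 1/4$. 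Thus $\Exp[r_{A_n^{\ext}}(m)] \ge (1-o(1))\cdot 800\pi/(3\sqrt 2)\cdot\log m$, comfortably above $160\log m$. The upper bound is parallel: extending the sum to $a \in [0,m/2]$ gives the full integral $\pi/2$, and the crude bound $\sqrt{(\log a)(\log(m-a))} \le \log m$ yields $\Exp[r_{A(n)}(m)] \le 800\pi\log m + O(\log m)$, where the error absorbs the bounded initial range on which $p\equiv 1$.

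Standard Chernoff estimates for sums of independent Bernoullis then give $\Pr[r_{A_n^{\ext}}(m) \le 160\log m] \le m^{-C_1}$ and $\Pr[r_{A(n)}(m) \ge K\log m] \le m^{-C_2}$ for any fixed $C_1, C_2$, by choosing $K$ large enough. Since $|I_n| \le X_{n+1} = X_n^2$ and $m \ge X_n$ throughout $I_n$, a union bound gives probability $O(X_n^{2-C})$ that either inequality fails anywhere in $I_n$; this is summable in $n$ once $C > 2$, and Borel--Cantelli delivers the asymptotic conclusion. The only real obstacle is tracking constants explicitly in the lower-bound integral, but the factor-of-roughly-three slack in the expectation leaves ample room for Chernoff to succeed.
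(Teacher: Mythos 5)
Your proposal is correct and follows essentially the same route as the paper: identify $r_A(m)$ as a sum of independent Bernoulli variables, bound the mean above and below by constant multiples of $\log m$, apply Chernoff, and conclude with Borel--Cantelli (the paper sums the tail probabilities directly over $m$ rather than union-bounding over $I_n$ first, which is an immaterial difference). The only substantive variation is in the lower bound on the expectation, where the paper invokes log-convexity of $p$ to get $\Exp[r_{A_n^{\ext}}(m)]\ge(m/4-1)p(m/2)^2$ while you evaluate the sum via the integral $\int dt/\sqrt{t(1-t)}$; both are valid and yield comparable constants.
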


To be clear, the second statement means that there exists an absolute constant $c$ such that with probability $1$, for all $n$ sufficiently large, for all $m\in I_n$ we have $r_{A(n)}(m) < c \log X_n$.

\begin{proof}
We have
$$r_{A_n^{\ext}}(m) = \sum_{i=X_n/4}^{\lfloor m/2\rfloor} \Y_i \Y_{m-i}.$$
The random variables $\Y_i \Y_{m-i}$ for $i$ from $X_n/4$ to $\lfloor m/2\rfloor$ are independent and Bernoulli with expectation $p(i)p(m-i)$ (unless $i=m/2$, in which case it is just $p(m/2)$.)
As $\frac{\log x}x$ is log-convex for large $x$, the same thing is true for $p(x)$ for large enough $x$.
For large enough $m$, therefore,
the expectation for this sum is at least
$$\Bigl(\frac m2-\frac{X_n}4-1\Bigr)p(m/2)^2 \ge \Bigl(\frac m4-1\Bigr) p(m/2)^2 > 320\log m.$$
By the lower Chernoff bound for sums of independent Bernoulli variables,
$$\Pr[r_{A_n^{\ext}}(m) \le 160 \log m] \le e^{-40\log m} = m^{-40}.$$
This gives a convergent series in $m$, so the first claim follows from the Borel-Cantelli lemma.

The upper bound follows from the fact
$$\sum_{i=2}^{\lfloor m/2\rfloor} p(i)p(m-i) \le 40^2\sum_{i = 2}^{\lfloor m/2\rfloor} i^{-1/2}(m-i)^{-1/2}\log m \ll (m/2)^{-1/2}\sum_{i = 2}^{\lfloor m/2\rfloor} i^{-1/2}\log m \ll \log m.$$
For some absolute constant $c>6$, this is less than $(c/2) \log m$. The second claim in the lemma now follows by using the upper Chernoff bound 

$$\Pr[r_{A_n}(m)\ge c\log m]  \le e^{-\frac {c\log m}6} = m^{-c/6},$$
which is convergent.
\end{proof}

\begin{corollary}
\label{members}
It is asymptotically true that $|A_n| \ll X_n \sqrt{\log X_n}.$
\end{corollary}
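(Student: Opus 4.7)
The plan is to mimic the Chernoff-plus-Borel-Cantelli structure used in the proof of Lemma \ref{bounds}, applied directly to $|A_n|=\sum_{m\in I_n}\Y_m$, which is a sum of independent Bernoulli variables.

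First I would estimate $\mu_n:=\Exp[|A_n|]=\sum_{m\in I_n}p(m)$. For $n$ large enough that $40\sqrt{(\log m)/m}<1$ throughout $I_n$, compare this sum to the integral $40\int_{X_n}^{X_{n+1}}\sqrt{(\log x)/x}\,dx$; the substitution $u=\sqrt{x}$ converts it to $80\sqrt{2}\int_{\sqrt{X_n}}^{\sqrt{X_{n+1}}}\sqrt{\log u}\,du$, which is $\Theta\bigl(\sqrt{X_{n+1}\log X_{n+1}}\bigr)$. Because $X_{n+1}=X_n^2$ and $\log X_{n+1}=2\log X_n$, this equals $\Theta(X_n\sqrt{\log X_n})$, and so $\mu_n=\Theta(X_n\sqrt{\log X_n})$.

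With this expectation in hand, the upper Chernoff bound yields $\Pr[|A_n|>2\mu_n]\le e^{-\mu_n/3}$. Since $\mu_n$ grows doubly exponentially in $n$, this sequence of probabilities is summable with enormous slack, and Borel--Cantelli gives $|A_n|\le 2\mu_n\ll X_n\sqrt{\log X_n}$ for all but finitely many $n$, almost surely.

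There are no genuinely hard steps here; the only care needed is to track that the logarithmic factor in the final bound is $\sqrt{\log X_n}$ rather than $\sqrt{\log X_{n+1}}$, a saving that comes from the $\sqrt{X_{n+1}}=X_n$ already present in the integral. A slicker alternative would bypass Chernoff altogether: since $\binom{|A_n|+1}{2}=\sum_m r_{A_n}(m)\le\sum_{m=2X_n}^{2X_{n+1}-2}r_{A(n+1)}(m)$ and by Lemma \ref{bounds} each term on the right is $\ll\log X_n$ asymptotically (applying the lemma in $I_n$ and in $I_{n+1}$), one immediately obtains $|A_n|^2\ll X_{n+1}\log X_n\ll X_n^2\log X_n$, and the corollary follows by taking square roots.
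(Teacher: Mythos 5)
Your proposal is correct, and in fact you have given two correct proofs. The paper supplies no argument at all for this corollary; since it is stated as a consequence of Lemma~\ref{bounds}, the intended derivation is almost certainly the one you sketch at the end: $\binom{|A_n|+1}{2}=\sum_m r_{A_n}(m)$, every $m$ in the relevant range $[2X_n,2X_{n+1}-2]$ lies in $I_n\cup I_{n+1}$, each term is $\ll\log m\ll\log X_n$ by the lemma, and the range has length $\ll X_{n+1}=X_n^2$, giving $|A_n|^2\ll X_n^2\log X_n$. Your primary argument --- estimating $\mu_n=\sum_{m\in I_n}p(m)=\Theta(X_n\sqrt{\log X_n})$ via the substitution $u=\sqrt{x}$ and then applying the multiplicative Chernoff bound and Borel--Cantelli --- is a genuinely different, self-contained route that never invokes Lemma~\ref{bounds}; its computations check out (in particular $\sqrt{X_{n+1}\log X_{n+1}}=X_n\sqrt{2\log X_n}$, and $e^{-\mu_n/3}$ is summable with room to spare). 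What the direct route buys is independence from the lemma and a matching lower bound $|A_n|\gg X_n\sqrt{\log X_n}$ for free; what the counting route buys is brevity and an explanation of why the statement is labeled a corollary. Either is acceptable here, since only the upper bound is ever used.
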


We will need bounds for the number of solutions to certain systems of  linear equations in $A(n)$.
Specifically, we want to estimate the probability that there exist distinct triples $(x_i,y_i,z_i)\in A(n)$ where $x_i+y_i$
and $y_i+z_i$ are the same for all $i$. 
Our approach is somewhat similar in spirit to that of \cite{RSZ18}.

We fix $q\neq r\in I_n$ and a positive integer $k$ and estimate the expected number of solutions 
in $A(n)^{3k}$ of systems of equations of the following form:
\begin{equation}
\label{system}
x_i+y_i = q,\ y_i+z_i=r,\  1\le i\le k,
\end{equation}
where the triples $(x_i,y_i,z_i)$ are pairwise distinct.

The general features of the problem are already present in the case $k=2$. The expectation can be written
\begin{equation}
\label{example}
\begin{split}
\Exp\Bigl[\sum_{y_1\neq y_2\in [1,X_{n+1})} &\one_{A(n)}(q-y_1)\one_{A(n)}(y_1)\one_{A(n)}(r-y_1)\one_{A(n)}(q-y_2)\one_{A(n)}(y_2)\one_{A(n)}(r-y_2)\Bigr]\\
&\hskip -40pt= \sum_{y_1\neq y_2\in [1,X_{n+1})} \Exp[\one_{A(n)}(q-y_1)\one_{A(n)}(y_1)\one_{A(n)}(r-y_1)\one_{A(n)}(q-y_2)\one_{A(n)}(y_2)\one_{A(n)}(r-y_2)]\\
&\hskip -40pt= \sum_{y_1\neq y_2\in [1,X_{n+1})} \Exp[\Y_{q-y_1}\Y_{y_1}\Y_{r-y_1}\Y_{q-y_2}\Y_{y_2}\Y_{r-y_2}].
\end{split}
\end{equation}

Any term in this sum is associated with a pair $(y_1,y_2) = (b_1,b_2)$ because $(x_1,x_2) = (a_1,a_2)$ and $(z_1,z_2) = (c_1,c_2)$ must then be $(q-b_1,q-b_2)$ and $(r-b_1,r-b_2)$ respectively.
When $a_1,a_2,b_1,b_2,c_1,c_2$ are pairwise distinct, the $\Y$-factors in the summand on the right hand side of \eqref{example}
are independent random variables, so the summand can be written
$$
p(a_1)p(b_1)p(c_1)p(a_2)p(b_2)p(c_2).$$
However, there may be equalities between pairs of $a_1,a_2,b_1,b_2,c_1,c_2$. For instance, if $a_1=c_2$ but all other pairs of terms are distinct, then 
$\Y_{a_1} \Y_{c_2} = \Y_{a_1}$, but otherwise the $\Y$-terms are still independent, so 
the summand in \eqref{example} is
$$p(a_1)p(b_1)p(c_1)p(a_2)p(b_2).$$
If $a_1 = b_2$, then the relation $a_1+a_2 = q = b_1+b_2$ implies that also $a_2=b_1$, and if there are no other relations among the $a_1,a_2,b_1,b_2,c_1,c_2$, the summand is 
$$p(a_1)p(b_1)p(c_1)p(c_2).$$

We can therefore partition the range of summation $(y_1,y_2)\in A(n)^2$ into subsets according to the equivalence relation given by which of $a_1,\ldots,c_2$ are equal to one another.
%The combinatorics is somewhat complicated not only because some equalities between pairs of variables imply others but also because some pairs of equalities are not possible. For instance, we are assuming the two triples are not equal, and that implies $a_1\neq a_2$, $b_1\neq b_2$, and $c_1\neq c_2$. We will also assume $q\neq r$, and that implies $a_1 \neq c_1$ and $a_2\neq c_2$.
Each part in the partition is given by some equalities and some inequalities among the pairs of coordinates, but since we are only interested in upper bounds, we can ignore the inequalities. For example,
the sum over the main term in the partition, for which the $a_1,\ldots,c_2$ are pairwise distinct, is bounded above by the sum over all $a_1,\ldots,c_2$.
$$\sum_{y_1,y_2\in [1,X_{n+1})} p(q-y_1)p(y_1)p(r-y_1)p(q-y_2)p(y_2)p(r-y_2).$$
Note that this factors as
\begin{equation}
\label{factor}
\sum_{y_1\in [1,X_{n+1})} p(q-y_1)p(y_1)p(r-y_1) \sum_{y_2\in [1,X_{n+1})} p(q-y_2)p(y_2)p(r-y_2).
\end{equation}

The contribution to the sum \eqref{example} of the terms where $a_1=b_2$ and therefore $a_2=b_1$, but there are no other equalities, is bounded above by
$$\sum_{y_1\in [1,X_{n+1})} p(q-y_1)p(y_1)p(r-y_1)p(r-(q-y_1)).$$

Let  $\Gamma_{q,r}$ denote the infinite dihedral group of linear functions generated by $x\mapsto q-x$ and $x\mapsto r-x$ (which we call the standard generators) under composition.
In general our summands are products of terms of the form $p(\gamma_i(y_i))$ for some variable $y_i$ and some $\gamma_i\in \Gamma_{q,r}$.
We group terms in the same variable and rewrite our sum as a product of sums over individual variables $y_i$ as in \eqref{factor}.

Let $\Gamma^+_{q,r}$ denote the subgroup of $\Gamma_{q,r}$ consisting of translations; it is generated by $x\mapsto x+q-r$.

\begin{lemma}
\label{crude}
Let  $q\neq r\in I_n$, $\delta > 0$, $k\ge 3$, $K\in \N$, and $\gamma_1,\ldots,\gamma_k$ a sequence of elements of $\Gamma_{q,r}$ of 
length $\le K$ in the standard generators. Let $j$ denote the number of indices $i$ such that $\gamma_i\in \Gamma^+_{q,r}$, and assume $1\le j\le k-1$.
If $n$ is large enough in terms of $\delta$, $k$ and $K$, then
$$\sum_{y\in [1,X_{n+1})} p(\gamma_1(y))\cdots p(\gamma_k(y)) \ll X_n^{\delta-\min(j,k-j)/2}.$$
\end{lemma}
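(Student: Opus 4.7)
Plan. The plan is to replace each $p(\gamma_i(y))$ by the envelope $40(\log\gamma_i(y)/\gamma_i(y))^{1/2}$ (valid for $\gamma_i(y)\ge 1$) and to absorb the logarithmic prefactor $(\log X_n)^{k/2}$ into $X_n^\delta$, since any nonzero $\gamma_i(y)$ is at most $O(X_n^2)$. It then suffices to prove
$$\sum_y\prod_{i=1}^k\gamma_i(y)^{-1/2}\ll X_n^{-\min(j,k-j)/2+o(1)}$$
over those $y$ with all $\gamma_i(y)\ge 1$. Writing each $\gamma_i$ as a translation $y+t_i$ or reflection $s_i-y$, the substitution $y\mapsto q-y$ swaps the two types and preserves the sum, so we may assume $j\le k-j$, reducing the target exponent to $-j/2$.

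The second step is to pair each of the $j$ translations $y+t_i$ with a distinct reflection $s_{\sigma(i)}-y$ via some injection $\sigma:[j]\to[k-j]$, leaving $k-2j\ge 0$ unpaired reflections. For each pair, the partial fraction identity
$$\frac{1}{(y+t_i)(s_{\sigma(i)}-y)}=\frac{1}{M_i}\Bigl(\frac{1}{y+t_i}+\frac{1}{s_{\sigma(i)}-y}\Bigr),\qquad M_i:=t_i+s_{\sigma(i)},$$
provides the crucial cancellation factor $M_i^{-1}$. An iterated application of Cauchy--Schwarz, combined with the elementary bounds $\sum_y(y+a)^{-1}\ll\log X_n$ and $\sum_y(y+a)^{-2}\ll 1$ for the resulting single-variable sums, leads to an estimate of the shape $X_n^{o(1)}\prod_i M_i^{-1/2}$. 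This is $\ll X_n^{-j/2+o(1)}$ precisely when $\prod_i M_i\gg X_n^{j-o(1)}$.

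The crux is therefore to bound $\prod_i M_i$ from below. Each $M_i$ is of the form $q+n_i(q-r)$ for some integer $n_i$ with $|n_i|=O(K)$. The key geometric observation is that at most one $M_i$ can be anomalously small: if $M_i$ and $M_{i'}$ with $n_i\neq n_{i'}$ were both at most $X_n^{1-\eta}$, then $(n_i-n_{i'})(q-r)=M_i-M_{i'}=O(X_n^{1-\eta})$ would force $|q-r|\le CX_n^{1-\eta}$, and consequently every $M_{i''}=q+n_{i''}(q-r)$ would satisfy $M_{i''}\ge q-O(K)\cdot|q-r|\ge X_n/2$, contradicting the assumption. The main obstacle is then the separate treatment of the exceptional case in which exactly one $M_i$ is small: here the support of $y$ collapses to a short interval, and one must exploit that on this interval the remaining $\gamma_l(y)$ are of size $\Theta(X_n)$ (because the corresponding pair sums are all $\gtrsim X_n$), so that the product of the remaining $p$-factors contributes the missing power of $X_n^{-1/2}$ needed to recover the full bound.
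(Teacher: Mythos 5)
Your architecture (pair each translation with a reflection, extract $M_i^{-1/2}$ via the partial-fraction identity, and split on whether some $M_i=q+n_i(q-r)$ is small) is a genuinely different organization from the paper's, which splits directly on the size of $|q-r|$; your main case covers the paper's regime $K|q-r|<q/4$ and your exceptional case forces $|q-r|\gg_K X_n$, the paper's second regime. But as sketched the argument has two real gaps. The first is the case $j=1$ of your main estimate. With a single pair and the unpaired reflections discarded, Cauchy--Schwarz (or the direct bound) gives $\sum_y g_1(y)^{1/2}\le M_1^{-1/2}\sum_y\bigl((y+t_1)^{-1/2}+(s_{\sigma(1)}-y)^{-1/2}\bigr)\ll M_1^{-1/2}\cdot M_1^{1/2}=O(1)$, since the support has length about $M_1$; this is nowhere near the target $X_n^{-1/2+\delta}$. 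You must keep at least one unpaired reflection inside the Cauchy--Schwarz (one exists because $k\ge3$): e.g. $\sum_y g_1^{1/2}(s_2-y)^{-1/2}\le(\sum_y g_1)^{1/2}(\sum_y(s_2-y)^{-1})^{1/2}\ll M_1^{-1/2}\log X_n$. The paper flags exactly this delicacy (``this last inequality is true even in the case $j=1$, as then $k-j-1\ge j$''); your write-up does not, and for $j=1$ the stated procedure fails.

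The second gap is the justification in the exceptional case. From $M_l=(y+t_l)+(s_{\sigma(l)}-y)\gtrsim X_n$ you conclude that the factors of the $l$-th pair are $\Theta(X_n)$ on the short support, but a large sum only forces the \emph{larger} of the two factors to be $\gtrsim X_n$; the smaller can equal $1$ somewhere on the support (e.g.\ when $t_l=\min_i t_i$). The correct route, which your own geometric observation already supplies, is: a small $M_{i_0}$ forces $|q-r|\gg_K X_n$, and since distinct translation shifts differ by nonzero multiples of $q-r$, at most one value of $t_i$ yields a factor $y+t_i$ that is not $\gg_K X_n$ on the support, and likewise for the reflections; hence at least $k-2$ of the $k$ factors are $\gg_K X_n$, and $\sum_y((y+t)(s-y))^{-1/2}\ll1$ absorbs the remaining two, giving $\ll_K X_n^{-(k-2)/2}\le X_n^{-\min(j,k-j)/2}$ --- which is precisely the paper's second case. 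Relatedly, ``at most one $M_i$ can be anomalously small'' should read ``at most one value of $n_i$'': distinct pairs may share the same $n_i$ and hence the same small $M_i$, so the bookkeeping must allow several exceptional pairs with a common small sum. None of this is fatal to your strategy, but as written the proof does not close.
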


\begin{proof}
We order the $\gamma_i$ so that $\gamma_1,\ldots,\gamma_j$ lie in $\Gamma^+_{q,r}$ and the remaining $\gamma_i$ lie in its complement.

Since we are giving up a factor of $X_n^\delta$, we can ignore the $\log$ factors and replace $p(x)$ with the function $x^{-1/2}$ when $x\ge 1$ and $0$ otherwise.
In other words, it suffices to show that
$$\sum_{\substack{y\in [1,X_{n+1})\\ \gamma_i(y)>0, i=1,\ldots,k}} (\gamma_1(y)\cdots \gamma_k(y))^{-1/2} \ll X_n^{\delta-\min(j,k-j)/2}.$$
The range over which the sum is taken is a subinterval of $[1,X_{n+1})$, and translating the coordinate system, we can write the sum as
\begin{equation}
\label{lemma4}
\sum_{y=1}^M \bigl((y+u_1)\cdots(y+u_j)(M+1+v_{j+1}-y)\cdots (M+1+v_{k}-y)\bigr)^{-1/2},
\end{equation}
where 
$$0=u_1 < \cdots < u_j,\ 0=v_{j+1}<\cdots<v_k.$$

For all $1<i\le j$, $u_i-u_{i-1} \ge |q-r|$ and likewise for the $v_i$.
Moreover, all values of $u_i$ and $v_i$ are bounded by $K|q-r|$.
 
%Thus, for $1\le i\le j$, we have $\gamma_i(x) = x + c_i (q-r)$, where $|c_i|\le K$ is an integer. Likewise, for $j<i\le k$, $\gamma_i(x) = (q + d_i(q-r)) - x$, where $|d_i|\le K$ is an integer.
We distinguish two cases, depending on the size of $|q-r|$.
First assume $K|q-r| < q/4$. 
In bounding the sum in \eqref{lemma4} over $y\in [1,M/2]$, we may therefore replace each factor $j<i\le k$ by $q^{-1/2}$ up to a constant factor,
so it suffices to bound
$$q^{-(k-j)/2}\sum_{y=1}^M ((y+u_1)\cdots (y+u_j))^{-1/2} \le q^{-(k-j)/2}\sum_{y=1}^M y^{-j/2} \le q^{\delta-\min(j,k-j)/2}.$$
Note that this last inequality is true even in the case $j=1$, as then $k-j-1\ge j$.
%
% If $j\ge 2$, the sum on the right hand side is bounded above by a log factor, so the sum to $q/2$ is 
%$\ll q^{-(k-j)/2}\log X_n \ll q^{\delta - \min(j,k-j)/2}$. If
%$j=1$, then the sum of $y^{-j/2}$ to $M$ is $\ll M^{1/2} \ll X_n^{1/2}$. In this case $k-j\ge j+1$, so the sum is $\ll q^{-(k-j)/2}X_n^{1/2} \le X_n^{-1/2} = q^{-\min(j,k-j)/2}$.
We handle the sum over $y>M/2$ in the same way, except that we make a change of variables $y\mapsto M+1-y$, and the role of the $u_i$ and $v_i$ 
are interchanged. Now $q\gg X_n$, so our upper bounds imply the lemma in this case.

Finally, we consider the case $K|q-r| \ge q/4$. For any given $y$, only $y+u_1$ and $M+1+v_{j+1}-y$, among the factors in \eqref{lemma4}, can be $o(q)$.
Thus, \eqref{lemma4} can be bounded by
$$\sum_{y=1}^M (y(M+1-y)q^{k-2})^{-1/2} \ll q^{-(k-2)/2}.$$
%%
%where 
%%
%$$0\le u_1,q/4K\le u_2<\cdots<u_j,\ 0<v_{j+1},q/4K\le v_{j+2} < \cdots<v_k.$$
%%
%By symmetry, it suffices to consider the sum from $1$ to $M/2$, and we obtain the upper bound
%%
%$$M^{-1/2}(q/4K)^{-(1+k-j)/2}(q/4K)^{-(j-1)/2}\sum_{y=1}^M (y+u_1)^{-1/2}  \ll q^{-(k-2)/2}.$$
%%
As $k\ge 3$, we have $k-2\ge \lfloor k/2\rfloor \ge \min(j,k-j)$, so again we get the desired inequality.
\end{proof}

We now consider the case we actually need.

\begin{prop}
\label{systems}
Let $k\ge 17$. It is asymptotically true that for all $q$ and $r$ distinct elements of $I_n$, the system
\eqref{system} has no solution in $A(n)$ with the triples $(x_i,y_i,z_i)$ pairwise distinct.
\end{prop}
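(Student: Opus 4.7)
The plan is to apply Markov's inequality combined with a union bound over $q\neq r\in I_n$ and all $n$, then invoke Borel--Cantelli. Since each triple $(x_i,y_i,z_i)$ is determined by $y_i$ via $x_i=q-y_i$ and $z_i=r-y_i$, the triples are pairwise distinct iff the $y_i$ are. For fixed $q,r$, the expected number of solutions is
$$S(q,r) = \sum_{\substack{y_1,\ldots,y_k\in [1,X_{n+1}) \\ y_i \text{ pairwise distinct}}} \Exp\Bigl[\prod_{i=1}^k \Y_{q-y_i}\Y_{y_i}\Y_{r-y_i}\Bigr],$$
and since each $\Y_s$ is Bernoulli, each summand equals $\prod_s p(s)$ where $s$ ranges over the distinct values appearing among the $3k$ positions $\{q-y_i,y_i,r-y_i\}_i$.

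I would then partition the summation according to the equality pattern on these $3k$ positions; each pattern corresponds to a set of identifications $\gamma(y_i)=\gamma'(y_j)$ with $\gamma,\gamma'\in\Gamma_{q,r}$. These identifications induce an equivalence relation (``orbits'') on $\{1,\ldots,k\}$ whose classes have the property that the $y_j$'s in each orbit are determined $\Gamma_{q,r}$-translates of a single representative $y_{i_\ell}$. Choosing one representative per orbit gives $d$ free variables, and the contribution of the pattern is bounded by
$$\prod_{\ell=1}^d \sum_{y\in[1,X_{n+1})}\prod_{\gamma\in H_\ell} p(\gamma(y)),$$
where $H_\ell\subset\Gamma_{q,r}$ is the set of distinct transformations applied to $y_{i_\ell}$ across the $3|\text{orbit}|$ positions in its orbit. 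The key observation is that for an orbit of size $o\ge 1$, one always has $|H_\ell|\ge 3$ and $H_\ell$ contains both translations and non-translations, since left-composing with $q-\cdot$ or $r-\cdot$ toggles membership in $\Gamma^+_{q,r}$, so both types must appear from any single triple. Lemma~\ref{crude} therefore applies and yields a bound of $X_n^{\delta-1/2}$ even in the worst case $|H_\ell|=3$, and of $X_n^{\delta-\Omega(o)}$ for larger orbits.

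Multiplying across the $d$ orbits, whose sizes partition $k$, the contribution of each pattern is at most $X_n^{d\delta - k/2}$. Summing over the finitely many patterns (a number depending only on $k$) and over the at most $|I_n|^2\le X_n^4$ pairs $(q,r)$ gives
$$\sum_{q\neq r\in I_n}\Exp[S(q,r)] \ll X_n^{4 - k/2 + O(\delta k)},$$
which for $k\ge 17$ and $\delta$ sufficiently small is $\ll X_n^{-c}$ for some $c>0$. Since $X_n=2^{2^n}$, the series $\sum_n X_n^{-c}$ converges, and Markov's inequality combined with Borel--Cantelli yields the conclusion. The main obstacle is the case analysis across equality patterns: one must verify that inconvenient configurations (many singleton orbits, intra-triple collapses such as $2y_i=q$ which pin $y_i$ to a single value, or orbits that produce small $|H_\ell|$ after coincidences) do not push the exponent above $-c$. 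The generous threshold $k\ge 17$ provides ample slack to absorb the $\delta$-losses in Lemma~\ref{crude} and the $X_n^4$ factor from the union bound over $(q,r)$.
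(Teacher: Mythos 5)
Your overall architecture is exactly the paper's: first moment plus union bound over $(q,r)$ and Borel--Cantelli, partition of the sum by the equality pattern on the $3k$ coordinates, reduction to orbits on $\{1,\ldots,k\}$ with one free variable per orbit, and a per-orbit application of Lemma~\ref{crude}. The gap is at the one step that actually makes the proposition true: the quantitative dependence of the per-orbit exponent on the orbit size $o$. What you actually justify is that $H_\ell$ contains at least one translation and at least one reflection, which via Lemma~\ref{crude} gives only $X_n^{\delta-1/2}$ per orbit \emph{independently of $o$}. That is not enough: in the worst case all $k$ triples lie in a single orbit ($d=1$), and $X_n^{\delta-1/2}$ does not beat the $X_n^{4}$ union bound over $(q,r)$, no matter how large $k$ is. So everything rests on your unproven assertion that the exponent is $\Omega(o)$, and in fact on the implied constant.

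The paper's proof of that assertion is the real content of the argument: writing an orbit $S$ as $S_+\sqcup S_-$ according to whether $b_j$ is a translation or a reflection of the representative $b_i$, and using that the $b_j$ ($j\in S$) are pairwise distinct \emph{and} the $a_j$ are pairwise distinct (with $a_j$ obtained from $b_j$ by a reflection, which swaps the two types), one gets at least $\max(|S_+|,|S_-|)\ge |S|/2$ translations and at least as many reflections among the maps in $H_\ell$. Lemma~\ref{crude} then gives $X_n^{\delta-|S|/4}$ per orbit and $X_n^{k\delta-k/4}$ in total, so one needs $k/4>4$, i.e.\ $k\ge 17$. Your claimed total of $X_n^{d\delta-k/2}$ corresponds to a per-element constant of $1/2$ rather than $1/4$; you give no argument for it, and with the constant that is actually provable the threshold $k\ge 17$ is tight rather than ``generous.'' To repair the proof you must supply this counting argument (or some substitute showing each orbit of size $o$ forces at least $c\,o$ translations and at least $c\,o$ reflections among the distinct maps applied to the representative, with $2c\cdot k>4$, wait---with $\tfrac{c}{2}k>4$), and also carry out the corner-case check you correctly flag (value coincidences such as $2y_i=q$ that pin the free variable), since in those cases the factorized sum has fewer than three factors and Lemma~\ref{crude} does not apply directly; one instead bounds the single pinned term.
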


\begin{proof}
Fix $q$ and $r$, and consider a solution $(x_i,y_i,z_i)=(a_i,b_i,c_i)$ of \eqref{system} where the triples are pairwise distinct (and therefore the $b_i$ are pairwise distinct as well).
We can express $a_i$ and $c_i$ in terms of $b_i$ as above and
express the probability that for all $i$, $(a_i,b_i,c_i)\in A(n)^3$ as the expectation of 
\begin{equation}
\label{product}
\one_{A(n)}(a_1)\cdots \one_{A(n)}(a_k)\one_{A(n)}(b_1)\cdots \one_{A(n)}(b_k)\one_{A(n)}(c_1)\cdots \one_{A(n)}(c_k).
\end{equation}

Consider, therefore, the equivalence relation $\E$ on $\{1,\ldots,3k\}$ determined by equality of coordinates 
of the vector 
$$(d_1,\ldots,d_{3k}) = (a_1,\ldots,a_k,b_1,\ldots,b_k,c_1,\ldots,c_k).$$
Let $R_{\E}$ denote the set of smallest representatives of equivalence classes in $\E$.
Thus \eqref{product} is equal to
\begin{equation}
\label{shorter product}
\prod_{j\in R_{\E}} \one_{A(n)}(d_j).
\end{equation}

Now, $\E$ determines an equivalence relation $\E'$ on the set $\{1,\ldots,k\}$ as follows. Let $i\sim j$ if some element of
$\{a_i,b_i,c_i\}$ is equivalent to some element of $\{a_j,b_j,c_j\}$ under $\E$, and take the transitive closure.
Define $R_{\E'}$ to be the subset of $[1,k]$ consisting of the smallest representative of each equivalence class of $\E'$.

The linear relations between $x_i$, $y_i$, and $z_i$ imposed by \eqref{system} imply that if $\{a_i,b_i,c_i\}$ meets $\{a_j,b_j,c_j\}$, then $b_j$ can be expressed as $\gamma(b_i)$
for some $\gamma\in \Gamma_{q,r}$.
It follows that if $i$ and $j$ belong to an equivalence class $S$ of $\E'$, then $b_j$ can be expressed as $\gamma(b_i)$ for some $\gamma\in \Gamma_{q,r}$.
If $i$ denotes the element of $R_{\E'}$ representing $S$,
for each $j\in S$, $b_j$ can be expressed as a linear function in $\Gamma_{q,r}$, of length bounded by $|S|$ in terms of the standard generators, evaluated at $b_i$,
so the $a_j$ and the $c_j$ can be expressed in terms of $b_i$ by elements of $\Gamma_{q,r}$ of length $\le |S|+1$.

Taking all of these elements of $\Gamma_{q,r}$ together, at least $|S|/2$ are translations and at least $|S|/2$ are reflections.
Indeed, let $S = S_+\coprod S_-$ where $j\in S$ belongs to $S_+$ if and only if the linear function in $\Gamma_{q,r}$ expressing $b_j$ in terms of $b_i$ is a translation;
thus, $j\in S_-$ means the linear function is a reflection. Since $a_j$ is obtained from $b_j$ by a reflection,
it is expressible in terms of $b_i$ by a translation in $\Gamma_{q,r}$ if and only if $j\in S_-$.
The $b_j$ for $j\in S$ are pairwise distinct, so there are at least $|S_+|$ elements in $\bigcup_{j\in S} \{a_j,b_j,c_j\}$ obtained from $b_i$ by a translation in $\Gamma_{q,r}$
and at least $|S_-|$ by a reflection. On the other hand, the $a_j$ for $j\in S$ are likewise pairwise distinct, so there are at least $|S_-|$ elements in $\bigcup_{j\in S} \{a_j,b_j,c_j\}$ obtained from $b_i$ by a translation and at least $|S_+|$ by a reflection.  So there are at least $\max(|S_+|,|S_-|) \ge |S|/2$ elements  $\bigcup_{j\in S} \{a_j,b_j,c_j\}$
obtained from $b_i$ by translating and at least the same number by reflecting.

%
%
%If $f_j$ is the number of equivalence classes of size $j$, then $|R_{\E'}| = f_1+f_2 + \cdots$, and 
%%the number of elements of the set $\{x_1,\ldots,x_k,y_1,\ldots,y_k,z_1,\ldots,z_k\}$ 
%$|R_{\E}|$ is at least $-2 + 3f_1 + 3f_2 + \sum_{i\ge 3} i f_i$.  
%(The contribution of classes of size $1$ to $|R_{\E'}|$ is at least $3f_1-2$ because there may be up to two exceptional classes.)
%If $k = f_1+2f_2+\cdots \ge 19$, then 
%%
%\begin{equation}
%\label{key ineq}
%2|R_{\E'}| < 2|R_{\E'}|+k/3-6\le -4 -2 + 3f_1 + 3f_2 + \sum_{i\ge 3} i f_i \le -4 + |R_{\E}|.
%\end{equation}
%

Writing $(x_1,\ldots,x_k,y_1,\ldots,y_k,z_1,\ldots,z_k)$ as $(w_1,\ldots,w_{3k})$,
for each $q$ and $r$ and for each partition $\E$, we express each $w_i$ as a linear polynomial $\gamma_{\E,i}(y_{j_{\E,i}})$, 
where $\gamma_{\E,i}\in \Gamma_{q,r}$ and $j_{\E,i}\in R_{\E'}$ depend only on $i$ and $\E$. 
It is not necessarily the case that for each positive integer choices of $(y_j)_{j\in R_{\E'}}$, the 
solution obtained by evaluating the $\gamma_{\E,i}$ actually gives the original partition $\E$; it might give something coarser, but this causes no problems.

Let $J(j)$ denote the set of $i$ such that $j_{\E,i} = j$.
To obtain an upper bound for the contribution of $\E$ to the 
expectation of \eqref{shorter product}, we drop the inequality conditions (since all terms in the sum are non-negative). This gives
\begin{align*}
\sum_{(y_j)_{j\in R_{\E'}}\in [1,X_{n+1})^{|R_{\E'}|}} \Exp\Bigl[\prod_{i\in R_{\E}} \one_{A(n)}(\gamma_{\E,i}(y_{j_{\E,i}}))\Bigr]
%=  \prod_{i\in R_{\E}} \sum_{y\in I_n} p(\gamma_{\E,i}(y)) 
&= \prod_{j\in R_{\E'}}\sum_{y_j\in [1,X_{n+1})} \Exp\biggl[\prod_{i\in J(j)}\one_{A(n)}(\gamma_{\E,i}(y_j))\biggr] \\
&=  \prod_{j\in R_{\E'}}\sum_{y_j\in [1,X_{n+1})} \prod_{i\in J(j)\cap R_{\E}}p(\gamma_{\E,i}(y_j)).
\end{align*}
This is a product of $|R_{\E'}|$ terms, each of which is a sum of products of the type bounded in Lemma~\ref{crude}. If $S$ is an equivalence class in $\E'$,
its corresponding sum is therefore bounded above by $X_n^{\delta-|S|/4}$, so the product as a whole is bounded above by $X_n^{k\delta-k/4}$.
Taking $k=17$ and $\delta = 1/(8\cdot 17)$, we get the upper bound $X_n^{-33/8}$. Summing over all possible values 
of $q\neq r\in I_n$, we see that the expected value of the number of solutions of
$$x_1+y_1=\cdots=x_k+y_k\neq y_1+z_1=\cdots =y_k+z_k$$
with the $(x_i,y_i,z_i)$ pairwise distinct
is $\ll X_n^{-1/8}$.  
By Borel-Cantelli, this implies that with probability $1$, this system has no solution in pairwise distinct triples when $n$ is sufficiently large.

\end{proof}

\section{The Main Construction}

Over the course of this construction, we make several assumptions about $n$ which are asymptotically true. 
If, for a particular $n$, any of these assertions fails,
we set $A''_n=A'_n=A_n$.  We begin by assuming $n>10$.

For every $b\in [X_{n+1}/6,X_{n+1}/4)$, let $U_b$ be the set $a\in A_n$ for which $b-a\in  A''(n-1)\cup A_n$.
We choose at random a short sequence $b_1,\ldots,b_{k_n}$ of pairwise distinct terms in $[X_{n+1}/6,X_{n+1}/4)$ where $k_n$ is to be specified shortly.
Let $B_n = \{b_1,\ldots,b_{k_n}\}$. Note that $B_n$ is uniformly distributed over $k_n$-element subsets of the interval.
Let 
$$A'_n = A_n \setminus \bigcup_{b\in B_n} U_b.$$
By construction, $r_{A''(n-1)\cup A'_n}(b) = 0$ for all $b\in B_n$. 
We will show that this process, while eliminating all representations of the elements of $B_n$, hardly affects the number of representations for any other integers.
We begin with a technical lemma.

\begin{lemma}
\label{almost disjoint}
Let $S\subset [X_n/4,X_{n+1})$ be a set of positive integers, and let $k$ and $\ell$ be positive integers. If a subset $Z$ of $[X_{n+1}/6,X_{n+1}/4)$ of
cardinality $k$ is chosen uniformly, 
then the probability that $|S\cap Z| \ge \ell$ is $\ll_\ell \Bigl(\frac{k|S|}{X_n^2}\Bigr)^\ell$.
If instead, $Z\subset \N$ whose elements are each chosen independently, with probability at most $\epsilon$, then the probability that $|S\cap Z| \ge \ell$ is $\le (\epsilon |S|)^\ell$.
\end{lemma}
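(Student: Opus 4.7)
The plan is to apply the standard first-moment / union bound using falling factorials. In both parts, I would use the observation
$$\Pr[|S\cap Z|\ge \ell] \le \Exp\Bigl[\binom{|S\cap Z|}{\ell}\Bigr] = \sum_{T\subset S,\,|T|=\ell} \Pr[T\subset Z],$$
valid because $\binom{|S\cap Z|}{\ell}$ is a nonnegative integer that is at least $1$ whenever $|S\cap Z|\ge \ell$. The work then reduces to bounding $\Pr[T\subset Z]$ in each of the two probabilistic models and summing over $T$.

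For the uniform model, set $I=[X_{n+1}/6,X_{n+1}/4)$ and $N=|I|$, noting that $N=\Theta(X_{n+1})=\Theta(X_n^2)$. For any $\ell$-subset $T\subset S$, we have $\Pr[T\subset Z]=0$ unless $T\subset I$, in which case
$$\Pr[T\subset Z]=\frac{\binom{N-\ell}{k-\ell}}{\binom{N}{k}}=\frac{k(k-1)\cdots(k-\ell+1)}{N(N-1)\cdots(N-\ell+1)}\le \Bigl(\frac{k}{N-\ell}\Bigr)^{\ell}.$$
Combined with $\binom{|S|}{\ell}\le |S|^{\ell}/\ell!$ and $N\asymp X_n^2$, this yields
$$\Pr[|S\cap Z|\ge \ell]\le \binom{|S|}{\ell}\Bigl(\frac{k}{N-\ell}\Bigr)^{\ell}\ll_{\ell} \Bigl(\frac{k|S|}{X_n^2}\Bigr)^{\ell},$$
which is exactly the first claim. (For $n$ large enough in terms of $\ell$, $N-\ell$ is comparable to $N$, which is where the implicit constant's dependence on $\ell$ is hidden.)

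For the Bernoulli model, independence of the indicators $\one_{t\in Z}$ for $t\in T$ gives $\Pr[T\subset Z]=\prod_{t\in T}\Pr[t\in Z]\le \epsilon^{\ell}$. Summing over $T$,
$$\Pr[|S\cap Z|\ge \ell]\le \binom{|S|}{\ell}\epsilon^{\ell}\le \frac{(\epsilon|S|)^{\ell}}{\ell!}\le (\epsilon|S|)^{\ell}.$$

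There is really no serious obstacle here: this is a textbook Markov-on-falling-factorials computation. The only thing worth flagging is that the bound depends on the doubly exponential scaling $X_{n+1}=X_n^2$, so that the length of the target interval contributes the factor $X_n^2$ in the denominator of the first estimate. Everything else is one line of arithmetic per case.
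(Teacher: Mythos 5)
Your argument is correct and is essentially the paper's own proof: both reduce the event $|S\cap Z|\ge\ell$ to a union (or first-moment) bound over $\ell$-element subsets $T\subset S$, bound $\Pr[T\subset Z]$ by $\ll_\ell k^\ell X_n^{-2\ell}$ in the uniform case and by $\epsilon^\ell$ in the independent case, and multiply by the number of such subsets. Your falling-factorial computation just makes explicit what the paper leaves implicit.
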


\begin{proof}
%In the first case,  the probability that any particular $3$-element subset of $Z$ lies in $S$ is $\ll |S|^3 X_n^{-6}$, and there are $\ll k^3$ such triples.
The probability that any particular $\ell$-element subset of $S$ is contained in $Z$ is $\ll_\ell k^\ell X_n^{-2\ell}$ in the first case and
$\le \epsilon^\ell$ in the second case, and there are less than $|S|^\ell$ such $\ell$-element sets.
\end{proof}

\begin{lemma}
\label{not much changed}
If $|A''_{n-i}\setminus A'_{n-i}| \ll X_n^{1/8}$ for $i=1,2,3$ and $k_n \ll X_n^{1/8}$, then it is asymptotically true that for all $m\not\in B_n$,
\begin{equation}
\label{deviation}
0\le r_{A''(n-1)\cup A_n}(m)-r_{A''(n-1)\cup A'_n}(m) \le 37.
\end{equation}

\end{lemma}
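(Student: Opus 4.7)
The plan is to bound the loss
\[
L_m := r_{A''(n-1)\cup A_n}(m) - r_{A''(n-1)\cup A'_n}(m)
\]
by associating each lost representation with a witness in $B_n$. Any representation $(a,a')\in(A''(n-1)\cup A_n)^2$ of $m$ is destroyed iff at least one of $a,a'$ lies in $A_n\cap U_b$ for some $b\in B_n$, so setting
\[
N_{m,b}=\bigl|\{y\in A_n:\ b-y,\ m-y\in A''(n-1)\cup A_n\}\bigr|,
\]
each lost representation contributes at least one $y$ to some $N_{m,b}$, giving $L_m\le\sum_{b\in B_n}N_{m,b}$. It therefore suffices to prove two bounds: (i)~asymptotically $N_{m,b}\le 16$ for every $m\notin B_n$ and every $b\in B_n$, and (ii)~asymptotically $|B_n\cap S_m|\le 2$ for every $m$, where $S_m=\{b:N_{m,b}>0\}$. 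Together these yield $L_m\le 32$, comfortably inside the claimed~$37$.

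For~(i), each $y$ counted by $N_{m,b}$ gives a distinct triple $(b-y,y,m-y)\in(A''(n-1)\cup A_n)^3$ satisfying the system of Proposition~\ref{systems} with $q=b\in I_n$ and $r=m$ (and $q\ne r$ since $m\notin B_n$). Write $E=\bigcup_{j<n}(A''_j\setminus A'_j)$ for the ``added'' set; the hypothesis for $i=1,2,3$, together with the analogous inductive bound for $j\le n-4$, gives $|E|\ll X_n^{1/8}$. The middle coordinate is forced into $A_n$, so any triple not lying entirely in $A(n)^3$ has $x\in E$ or $z\in E$, which in turn forces $b-e$ or $m-e$ into $A_n$, an event of probability $\ll\sqrt{\log X_n/X_n}$. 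A first- and second-moment calculation on the randomness of $A_n$ and $B_n$ (the relevant expectations, raised to appropriate powers, are summable over $m$ and $n$) then rules out all such bad triples asymptotically by Borel--Cantelli, aside from at most one per $m$. For triples entirely in $A(n)^3$, Proposition~\ref{systems} gives at most $16$ per $(m,b)$ when $m\in I_n$; the case $m<X_n$ is trivial ($L_m=0$ because $A_n$ contains no summand of $m$), and the case $m\in[X_{n+1},2X_{n+1})$ is handled by the same proof since $|q-r|\gg q$ places us in the second case of Lemma~\ref{crude}.

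For~(ii), each summand $y\in A_n$ of $m$ (of which there are $\ll\log m\ll 2^n$ by Lemma~\ref{bounds}) contributes a subset of $y+(A''(n-1)\cup A_n)$ to $S_m$, of size $\ll X_n\sqrt{\log X_n}$ by Corollary~\ref{members}. Hence $|S_m|\ll X_n\cdot 2^{3n/2}$. Lemma~\ref{almost disjoint} applied to the uniformly random $B_n$ gives
\[
\Pr\bigl[|B_n\cap S_m|\ge 3\bigr]\ll\Bigl(\frac{k_n\,|S_m|}{X_n^2}\Bigr)^3\ll\Bigl(\frac{2^{3n/2}}{X_n^{7/8}}\Bigr)^3;
\]
summing over $m\in[1,2X_{n+1})$ yields $O(X_n^{-5/8}\cdot 2^{9n/2})$, which is summable in $n$, so Borel--Cantelli delivers $|B_n\cap S_m|\le 2$ asymptotically for every $m$.

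The main obstacle will be step~(i): Proposition~\ref{systems} directly controls only triples entirely in $A(n)^3$, and one must show that the added elements of $E$ cannot spoil the count. The hypothesis $|A''_{n-i}\setminus A'_{n-i}|\ll X_n^{1/8}$ is precisely what makes the auxiliary moment argument close.
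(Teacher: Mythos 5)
Your architecture is the same as the paper's: reduce each lost representation to a triple $(b-y,y,m-y)$, bound the number of relevant $b\in B_n$ by $2$ via Lemma~\ref{almost disjoint}, bound the number of triples per $b$ lying in $A(n)^3$ by $16$ via Proposition~\ref{systems}, and treat the contamination by $E=A''(n-1)\setminus A(n-1)$ separately. The decomposition $L_m\le\sum_b N_{m,b}$ and step (ii) are fine.

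The gap is in your handling of the triples with a coordinate in $E$, which you yourself flag as the main obstacle. The case $b-y\in E$ can indeed be killed outright (it forces $b\in A_n+E$, a set of size $\ll X_n^{9/8+\epsilon}$ that the independently chosen $B_n$ misses almost surely --- note this must run through the randomness of $B_n$, since a union bound over the randomness of $A_n$ alone diverges). But the case $m-y\in E$ cannot be reduced to ``at most one per $m$.'' The relevant event is $|(m-E)\cap A_n|\ge\ell$, which has probability $\ll\bigl(X_n^{1/8}\cdot X_n^{-1/2+\epsilon}\bigr)^{\ell}=X_n^{\ell(-3/8+\epsilon)}$; to sum this over the $\asymp X_n^2$ admissible values of $m$ and then over $n$ you need $\ell\ge 6$, so the best obtainable bound is \emph{five} such $y$'s per $m$, not one (this is exactly the paper's computation with $\ell=6$). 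Worse, in your decomposition these $y$'s sit inside $N_{m,b}$ and are counted once for each of the (up to two) relevant $b$'s, so the corrected count is $2(16+5)=42>37$, and your claimed $L_m\le 32$ does not survive. The repair is to peel off the lost representations with $m-y\in E$ \emph{before} summing over $b$: each such representation is lost only once, contributing $+5$ in total, and the remaining representations have $m-y\in A(n)$, giving $2\cdot 16+5=37$. This is precisely how the paper orders the argument. (Your extension of Proposition~\ref{systems} to $m\in[X_{n+1},2X_{n+1})$ via the second case of Lemma~\ref{crude} is a reasonable observation, but it is asserted rather than proved; the paper simply restricts to $m\in I_n$.)
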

\begin{proof}
Let 
$$E = A''(n-1) \setminus A(n-1),$$
so
\begin{equation}
\label{decomp}
A''(n-1)\cup A_n\subset E\cup A(n).
\end{equation}

The quantity in \eqref{deviation} is the number of representations $m=a+b$, with
$$a\in A''(n-1)\cup A_n,\ b\in (A''(n-1)\cup A_n)\setminus (A''(n-1)\cup A'_n) = A_n\setminus A'_n.$$
First consider the number of representations with $a\in E$. 
As $|A''_i| \le |I_i|  < X_{i+1} = X_n^{2^{i+1-n}}$, our hypotheses imply $|E| \ll X_n^{1/8}$. 
It is asymptotically true that for all $m\in I_n$, $|(m-E)\cap A(n)| \le 5$. Indeed,
applying Lemma~\ref{almost disjoint} with $S = m-E$, $Z = A^{\ext}_n$, $\epsilon = p(X_n/4)$, and $\ell=6$, 
the probability that the intersection has $\ge 6$ elements is $\ll \log^3 X_n X_n^{-9/4}$.

It suffices to show that the number of simultaneous solutions to 
\begin{equation}
\label{abc}
a+b=m,\ b+c\in B_n,\ a\in A(n),\ b,c\in A(n)\cup E
\end{equation}
is at most $32$.

%Also, since $A''_i$  is obtained from $A'_i\subset A_i$ by adding elements smaller than $X_{i+1}/4$, the sum of two elements in $E$ cannot lie in $I_n$.  
For fixed $E$, $A(n)$, and $k_n$, $B_n$ is uniformly distributed among $k_n$ element subsets of $[X_{n+1}/6,X_{n+1}/4)$, so
it is asymptotically true that $B_n$ and  $(A(n)\cup E) + E$ do not meet.  Therefore, we may assume $b,c\in A(n)$. 

%By Lemma~\ref{almost disjoint}, it is asymptotically true that $A(n)+\{m\}-E$ does not meet $B_n$ in more than $2$ elements for any $m\in I_n$.

%Let $F_n = A_n\setminus A'_n$. Note that $F_n\subset A(n)\cup E$.  
%Applying Lemma~\ref{almost disjoint} with $S=(m-E)+A(n)$, $k=k_n$, and $Z=B_n$, it is asymptotically true that $|B_n\cap S|\le 2$.
%If $r\in B_n\cap S$ is fixed 
%At the cost of excluding at most two solutions, we may therefore assume
%$a\in A(n)$. 
%Let $T_m$ be the set of $a_2\in A(n)$ such that $m-a_2\in A(n)\cup E$. 
%By \eqref{decomp}, we have
%%
%$$|T_m| \le 2r_{A(n)\cup E}(m) \le 2r_{A(n)}(m) + 2|E|.$$
%%
%By Lemma~\ref{bounds}, this is $\ll X_n^{1/16}$.

By Lemma~\ref{bounds},
$$|A(n)\cap (m-A(n))|\le 2r_{A(n)}(m) \ll \log X_n.$$
By Lemma~\ref{almost disjoint}, therefore, it is asymptotically true that for all $m\in I_n$, 
$$\Bigm|\!\Bigl(\bigl(A(n)\cap (m-A(n))\bigr)+A(n)\Bigr)\cap B_n\!\Bigm|\, \le 2.$$
By Proposition~\ref{systems}, it is asymptotically true that for all $m\in I_n$ and for each $r\in B_n$, the number of representations of $r$ as $b+c\in A(n)$
with $m\in b+A(n)$ is $< 17$, giving a total of at most $2\cdot 16+5=37$ solutions to \eqref{abc}.

\end{proof}

We remark that because $A''(n-1)\subset [1,X_n)$ and $A_n\subset I_n$, these two sets are disjoint, so for any subset $\tilde A$ of $A''(n-1)$, we have
\begin{equation}
\label{variant}
0\le r_{\tilde A\cup A_n}(m)-r_{\tilde A\cup A'_n}(m) \le 37.
\end{equation}

Finally, we add in a small number of elements to guarantee that $r_A(b) > \varepsilon\log b$ for $b\in B_n$, but we do this in a special way.
Let $C_{n-10}$ denote the set of elements $c\in I_{n-10}$ such that 
$$r_{A''(n-10)\cap A^{\ext}_{n-10}}(c) > 100 \log X_{n-10}.$$
For each such $c$, consider all such representations $c = a_i+a'_i$.
%By Lemmas~\ref{bounds} and \ref{not much changed}, there are $\gg \log X_{n-10}$ such representations.
Let $\S_c$ denote all sets consisting of one element of $\{a_i,a'_i\}$ for all but one $i\le 60\log_2 X_{n-10}+1$.
Let $k_n$ be the number of pairs $(c,S_c)$ where $c\in C_{n-10}$ and $S_c\in \S_c$. This is now our choice for the size of $B_n$.

Fix a bijection $\pi$ from $\{1,\ldots,k_n\}$ to pairs $(c,S_c)$. By abuse of notation, we write $\pi(b_i)$ for $\pi(i)$. 
If $\pi(b) = (c,S_c)$, we add the set $\{b\}-S_c$ to $A'_n$, doing this for all $b\in B_n$ to obtain
$A''_n$.  

\begin{lemma}
\label{sizes}
We have
$$k_n = O(X_n^{1/8})$$
and
$$|A''_n\setminus A'_n| = O(X_n^{1/4}).$$
\end{lemma}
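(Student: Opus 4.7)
The plan is to bound $k_n$ by combining a trivial bound on $|C_{n-10}|$ with a combinatorial count of $|\S_c|$, and then to use this to control $|A''_n\setminus A'_n|$.

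First, I would observe that an element $S_c\in \S_c$ is determined by the choice of the omitted index $i_0\in\{1,\ldots,60\log_2 X_{n-10}+1\}$ together with a binary choice between $a_i$ and $a'_i$ for each of the remaining indices. Hence
$$|\S_c| \le (60\log_2 X_{n-10}+1)\cdot 2^{60\log_2 X_{n-10}} = O(X_{n-10}^{60}\log X_{n-10}),$$
using the identity $2^{60\log_2 X_{n-10}}=X_{n-10}^{60}$. The construction is well-posed because $c\in C_{n-10}$ provides more than $100\log X_{n-10}>60\log_2 X_{n-10}+1$ representations to draw from.

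Combining this with the crude bound $|C_{n-10}|\le |I_{n-10}|<X_{n-9}=X_{n-10}^2$ gives
$$k_n = \sum_{c\in C_{n-10}}|\S_c| \ll X_{n-10}^{62}\log X_{n-10}.$$
Since $X_{n-10}=X_n^{2^{-10}}=X_n^{1/1024}$ and $62/1024<1/8=128/1024$, the first claim $k_n=O(X_n^{1/8})$ follows immediately.

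For the second claim, each $b\in B_n$ contributes to $A''_n\setminus A'_n$ the set $\{b\}-S_c$ with $|S_c|=60\log_2 X_{n-10}$, so
$$|A''_n\setminus A'_n| \le k_n\cdot 60\log_2 X_{n-10} = O(X_n^{1/8}\log X_n) = O(X_n^{1/4}).$$
There is no real obstacle in this argument; the only thing to check is the exponent arithmetic in converting powers of $X_{n-10}$ into powers of $X_n$, where the gap from $62/1024$ to $128/1024$ provides ample slack and absorbs the logarithmic factors.
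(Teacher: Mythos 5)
Your proof is correct and follows essentially the same route as the paper's: bound $|\S_c|$ by $2^{60\log_2 X_{n-10}}$ times the number of choices of omitted index, multiply by the trivial bound $|C_{n-10}|\le X_{n-10}^2$, convert to powers of $X_n$ via $X_{n-10}=X_n^{1/1024}$, and then bound $|A''_n\setminus A'_n|$ by $k_n\cdot|S_c|$. Your exponent arithmetic is fine (and in fact slightly sharper than the paper's intermediate bound $X_{n-10}^{121}$).
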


\begin{proof}
For each $c\in C_{n-10}$ there are at most $2^{60\log_2 X_{n-10}} \log_2 X_{n-10} <X^{121}_{n-10} < X_n^{61/512}$ elements of $\S_c$. Moreover, there are at most $X_{n-10}^2 = X_n^{1/512}$ possible choices of $c$.
This gives the first claim. For each element in $B_n$, we add $|S_c|\le 60\log_2 X_{n-10}$ 
elements to $A'_n$ to get $A''_n$. Therefore, the total number of elements is $\ll  |B_n|\log X_n$.
\end{proof}

\begin{lemma}
\label{BC}
It is asymptotically true that $B_n\cup C_n=I_n$.
\end{lemma}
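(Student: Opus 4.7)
The plan is to prove that for every $c\in I_n\setminus B_n$, asymptotically $r_{A''(n)\cap A^{\ext}_n}(c) > 100\log X_n$, which is exactly the condition $c\in C_n$. Starting from $r_{A^{\ext}_n}(c) > 160\log X_n$ via Lemma \ref{bounds}, it suffices to bound the loss $L(c) := r_{A^{\ext}_n}(c) - r_{A''(n)\cap A^{\ext}_n}(c)$ by $O(1)$. A lost representation is one of the form $c=a_1+a_2$ with both $a_i\in A^{\ext}_n$ but some $a_i$ removed in passing from $A(n)$ to $A''(n)$. Since $A^{\ext}_n\cap A_i=\emptyset$ for $i\le n-2$, bad summands can only have been removed at levels $n$ or $n-1$, so $L(c)\le L_n(c)+L_{n-1}(c)$, where $L_j(c)$ counts representations whose offending summand lies in $A_j\setminus A''_j\subset \bigcup_{b\in B_j}U_b$.

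The core step is the bound on the level-$n$ contribution. A bad summand $a$ that lies in some $U_b$ with $b\in B_n$ and appears in a representation of $c$ satisfies $a\in A_n$, $b-a\in A''(n-1)\cup A_n$, and $c-a\in A^{\ext}_n\subset A(n)$; in the dominant case $b-a\in A(n)$ this produces a triple $(b-a,a,c-a)\in A(n)^3$ solving system \eqref{system} with $q=b$ and $r=c$, both in $I_n$ and distinct since $c\notin B_n$. Proposition \ref{systems} then bounds the number of such triples per $(b,c)$ by $16$. The decisive observation is that the set $Q_c\subset [X_{n+1}/6,X_{n+1}/4)$ of $b$ admitting any triple is contained in $(A(n)\cap(c-A(n)))+A(n)$, whose cardinality is $\ll X_n\log^{3/2} X_n$ by Lemma \ref{bounds} and Corollary \ref{members}. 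Applying Lemma \ref{almost disjoint} with $\ell=2$ yields $\Pr[|B_n\cap Q_c|\ge 2]\ll X_n^{-7/4}\log^3 X_n$; summing over $c\in I_n$ and $n$ and invoking Borel-Cantelli gives $|B_n\cap Q_c|\le 1$ asymptotically, whence $L_n(c)\le 16$. The residual case $b-a\in A''(n-1)\setminus A(n-1)$ is handled separately: that set has size $O(X_{n-1}^{1/4})$ by Lemma \ref{sizes}, and a further application of Lemma \ref{almost disjoint} forces its contribution to be $o(1)$.

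The contribution $L_{n-1}(c)$ is treated by the same template, now with $b\in B_{n-1}\subset I_{n-1}$ and $c\in I_n$. The triples now live in a cross-scale instance of \eqref{system} with $|q-r|\sim X_n\gg q\sim X_{n-1}$, which is precisely the large-$|q-r|$ regime already covered by the second case of Lemma \ref{crude}. Following the same casework as in Proposition \ref{systems} yields a constant bound on triples per $(b,c)$ in this regime; combined with $|\tilde Q_c|\ll X_{n-1}\log^{3/2}X_n$ from the analogous computation and Lemma \ref{almost disjoint} at level $n-1$ (with $\ell=3$ to absorb the weaker exponent gap, whose sum over $c$ evaluates to $\ll \log^{9/2}X_n/X_{n-1}^{5/8}$), this forces $L_{n-1}(c)=O(1)$ asymptotically. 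Hence $L(c)=O(1)\ll 60\log X_n$ uniformly over $c\in I_n\setminus B_n$, as desired. The main obstacle I expect is the cross-scale adaptation just invoked: the bookkeeping comparing $|q-r|$ to $q$ when both $I_{n-1}$ and $I_n$ are involved requires care, even though it follows the template already laid out in Lemma \ref{crude}.
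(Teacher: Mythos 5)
Your overall strategy is the right one, and in fact it amounts to re-deriving Lemma~\ref{not much changed}: the paper's own proof of Lemma~\ref{BC} is a three-line deduction that starts from $r_{A_n^{\ext}}(m)>160\log m$ (Lemma~\ref{bounds}) and then simply cites \eqref{variant} (the subset form of Lemma~\ref{not much changed}, whose hypotheses are supplied by Lemma~\ref{sizes}) to lose at most $37$ representations. You instead redo that deletion analysis from scratch. That is legitimate, but two steps as written do not go through.

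First, the Borel--Cantelli application at level $n$ fails with $\ell=2$: you get $\Pr[|B_n\cap Q_c|\ge 2]\ll X_n^{-7/4}\log^3X_n$, but $|I_n|\sim X_n^2$, so summing over $c\in I_n$ already gives $\gg X_n^{1/4}$ for a single $n$ and the series diverges. You need $\ell=3$ (as the paper does in the proof of Lemma~\ref{not much changed}), which yields $|B_n\cap Q_c|\le 2$ asymptotically and hence $L_n(c)\le 32$ rather than $16$ --- still $O(1)$, so nothing downstream breaks, but the step must be corrected. Second, your treatment of $L_{n-1}$ rests on a ``cross-scale instance of \eqref{system}'' with $q\in I_{n-1}$, $r\in I_n$, but Proposition~\ref{systems} is proved only for $q,r$ both in $I_n$, and your appeal to ``the same casework'' is not a proof; as stated this is a genuine gap. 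Fortunately the entire case is vacuous: every element removed at level $n-1$ lies in $U_b$ for some $b\in B_{n-1}\subset[X_n/6,X_n/4)$, hence is a summand of such a $b$ and is therefore $<X_n/4$, so it cannot belong to $A_n^{\ext}=[X_n/4,X_{n+1})\cap A(n)$. Thus $L_{n-1}(c)=0$ identically, and the cross-scale machinery should simply be deleted. (The same observation is what makes the paper's identity $r_{\tilde A\cup A_n}(m)=r_{A_n^{\ext}}(m)$ exact.) With these two repairs your argument closes.
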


\begin{proof}
Let $m\in I_n\setminus B_n$. By Lemma~\ref{bounds}, it is asymptotically true that $r_{A_n^{\ext}}(m) > 160\log m$.
Plugging the bounds of Lemma~\ref{sizes} into Lemma~\ref{not much changed}, we may apply \eqref{variant} to $\tilde A = A_n^{\ext} \cap A''(n-1)$ and obtain
\begin{align*}r_{A_n^{\ext}\cap A''(n)}(m) &\ge r_{A_n^{\ext} \cap (A''(n-1)\cup A'_n)}(m) = r_{\tilde A\cup A'_n}(m) \ge r_{\tilde A\cup A_n}(m)-37 \\
&= r_{A_n^{\ext}}(m)-37 > 100 \log m \ge 100 \log X_n
\end{align*}
when $n$ is sufficiently large, implying $m\in C_n$.
\end{proof}

A crucial point is that when $n$ is large, for each $b\in B_n$, the only representations of $b$ as a sum of two elements of $A''(n)$ are those we 
intentionally added in passing from $A'_n$ to $A''_n$.

\begin{lemma}
\label{no interference}
It is asymptotically true that for all $b\in B_n$, if $(c,S_c) = \pi(b)$, then 
$r_{A''(n)}(b) = |S_c|$.
\end{lemma}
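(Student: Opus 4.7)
The plan is to enumerate representations $b = u + v$ with $u \le v$ and $u, v \in A''(n)$, splitting on whether the summands lie in $A''(n-1) \cup A'_n$ or in $W := A''_n \setminus A'_n$. If $u, v \in A''(n-1) \cup A'_n$, the contribution is zero since by construction $r_{A''(n-1) \cup A'_n}(b) = 0$. If $u, v \in W$, then each summand has the form $b' - s$ with $b' \in B_n \subset [X_{n+1}/6, X_{n+1}/4)$ and $s \in A^{\ext}_{n-10} \subset [X_{n-10}/4, X_{n-9})$, so each summand exceeds $X_{n+1}/6 - X_{n-9}$; the sum then exceeds $X_{n+1}/4 > b$ for $n$ large, a contradiction.

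It remains to handle representations in which exactly one summand, say $v$, lies in $W$. Write $v = b' - s'$ with $b' \in B_n$ and $s' \in S_{\pi(b')}$, so that $u = b - b' + s'$ must lie in $A''(n-1) \cup A'_n$. When $b' = b$ (which forces $s' \in S_c$ for $v = b - s'$ to have been added to $A'_n$), we recover an intentional representation: $s' \in A''(n-10) \subseteq A''(n-1)$, and since $s' < X_{n-9} < b/2$ for $n$ large, the pair $(s', b - s')$ satisfies $s' < b - s'$ and lies in $A''(n)^2$. As $s'$ ranges over $S_c$, these yield exactly $|S_c|$ distinct representations (no two can coincide as unordered pairs since that would require $s_1 + s_2 = b$, which fails by magnitude).

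The main obstacle is ruling out the case $b' \neq b$, which I would handle by a Borel--Cantelli argument. Since $A'_n \subseteq A_n$, it suffices to show that asymptotically there exist no distinct indices $i \neq j$ in $\{1,\ldots,k_n\}$ and $s' \in S_{\pi(i)}$ with $b_j - b_i + s' \in A''(n-1) \cup A_n$. Condition on $A''(n-1)$, the Bernoulli variables $\Y_m$ defining $A(n)$, and the bijection $\pi$. Then $B_n$ is uniform over $k_n$-element subsets of $[X_{n+1}/6, X_{n+1}/4)$, a set of cardinality $N = X_n^2/12$, so by exchangeability the pair $(b_i, b_j)$ is uniform over ordered distinct pairs in this interval. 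Hence for any fixed set $T$, the probability that $b_j - b_i + s' \in T$ is at most $|T|/(N-1)$. Applying this with $T = A''(n-1) \cup A_n$ and using Corollary~\ref{members} to bound $|T| \ll X_n \sqrt{\log X_n}$ gives a per-triple probability of $\ll X_n^{-1}\sqrt{\log X_n}$. By Lemma~\ref{sizes} the number of triples is $\ll k_n^2 \log_2 X_{n-10} \ll X_n^{1/4}\log X_n$, so the expected number of bad triples is $\ll X_n^{-3/4}\log^{3/2} X_n$. This is summable in $n$, and Borel--Cantelli completes the argument.
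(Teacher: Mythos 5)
Your proof is correct and takes essentially the same route as the paper's: the same three-way split on how many summands of $b$ lie in $A''_n\setminus A'_n$, the same magnitude argument when both do, and the same use of the independence of $B_n$ from everything previously constructed, via a first-moment bound and Borel--Cantelli, to kill the case $b'\neq b$ (you organize the union bound over triples $(i,j,s')$ landing in $A''(n-1)\cup A_n$, while the paper sums over difference values $e\in A_n-[X_{n-10}/4,X_{n-9})$, but it is the same computation and yields the same $X_n^{-3/4+o(1)}$ bound). The only quibble is that Corollary~\ref{members} is an almost-sure asymptotic statement rather than a deterministic bound, so before summing over $n$ you should either restrict to that event or replace it by the bound on $\Exp[|A_n|]$; this is routine and does not affect the argument.
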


\begin{proof}
Let $a''\in A''_n\setminus A'_n$, and suppose some $b'\in B_n$ satisfies
$b'=a+a''$ where $a\in A''_n$.   Then $a''$ is of the form $b-s$, where $s\in S_c$, so $a''\ge X_{n+1}/6 - X_{n-9}$.
If $a\in A''_n\setminus A'_n$, then likewise
$a \ge X_{n+1}/6 - X_{n-9}$, so 
$$b' = a+a'' \ge X_{n+1}/3-2 X_{n-9}>X_{n+1}/4,$$
which is impossible. Therefore, $a\in A'_n\subset A_n$, and the equation $b'-b = a-s$ has a solution with $b,b'\in B_n$, $a\in A_n$, and $s$ in some $S_c$.

We claim that the probability that such an equation has a solution with $b\neq b'$ goes rapidly to $0$ as $n\to \infty$.
Indeed, $B_n$ is independent of $A_n$ and therefore of $A_n-[X_{n-10}/4,X_{n-9})$, which is a set of size $\ll X_n X_n^{1/512}$, while $|B_n| = k_n \ll X_n^{1/8}$.
For any $e\neq0$, the probability that a random subset of $[X_{n+1}/6,X_{n+1}/4-1]$ of cardinality $k_n$ has two elements $b,b'$ with $b-b'=e$ is
$\ll k_n^2/X_{n+1} \ll X_n^{1/4-2}$. By the union bound, the probability of $B-B$ meeting $A_n-[X_{n-10}/4,X_{n-9})$ converges as we sum over $n$.

%If $\pi(b) = (c,S_c)$, let $V_b = \{b\} - S_c$, and $W_b = \{b\} - (A''(n-1)\cup A'_n)$.  Let $V = \bigcup_{b\in B_n} V_b$. 
%Note that every element of $V+V$ is larger than $X_{n+1}/3 - 2X_{n-9} > X_{n+1}/4$,
%so $(V+V)\cap B_n=\emptyset$.
%
%Suppose $b$ has more than $|S_c|$ representations over $A''(n)$. At least one pair of summands must not meet $V_b$. However, since $r_{A''(n-1)\cup A'_n}(b) = 0$,
%this representation must meet $V_{b'}$ for some $b'\in B_n$ distinct from $b$. Let the common element be $v$.  We have established that $b-v\not\in V$, so it must be in $A''_{n-1}\cup A'_n$.
%Thus, $v\in W_b$.  Our goal is to show that the statement that $V_{b'}\cap W_b=\emptyset$ for all $b\neq b'$ in $B_n$ is asymptotically true.
%
%Fix $b'\in B_n$. We may regard $B_n\setminus \{b'\}$ as being a uniformly distributed random $k_n-1$-element subset of $[X_{n+1}/6,X_{n+1}/4]\setminus \{b'\}$. 
%Fix an element $a\in A''(n-1)\cup A_n$. Then $b-a$ can be regarded as a uniformly distributed random variable on $[X_{n+1}/6-a,X_{n+1}/4-a]\setminus \{b'-a\}$.
%The probability that this element is in $V_{b'}$ is $\ll \log_2 X_{n-10}/X_{n+1}$. 
%Varying over $a$, the probability that $b-a\in V_{b'}$ for any $a\in A''(n-1)\cup A_n$ is $\ll X_{n+1}^{-1/2}\log^{3/2} X_{n+1}$.
%The probability that this occurs for any $b\in B_n$ is $\ll X_{n+1}^{-7/16}\log^{3/2} X_{n+1}$. Finally we vary over $b'$ and get an overall probability of $\ll X_{n+1}^{-3/8}\log^{3/2} X_{n+1}$.
%The sum of these bounds converges.

\end{proof}

%Note that this results in $r_{A''}(b) \le \log_2 X_{n-10} = 2^{-11} \log_2 X_{n+1}$ for all $b\in B_n$, giving an upper bound for $\varepsilon$ of $2^{-11}/\log 2$.

We remark that for $m\in B_n$, 
$$r_{A''(n)}(m) \ge 60 \log_2 X_{n-10} \ge \frac{15}{512\log 2}\log m,$$
and for $m\in C_n$ we have a better constant. This establishes condition (1) of Theorem~\ref{main} for $\varepsilon = \frac{15}{512\log 2} \approx .042$, but we have made no effort to
optimize this constant.

It remains to explain why $A''$, which we will now rename $A$, has no minimal subbasis. 
Let $B$ and $C$ denote the unions of $B_n$ and $C_n$ respectively over all $n\in\N$. 

Suppose that $D$ is any subbasis of $A$. Suppose $c\in C_{n-10}$ for some $n$ satisfies $r_D(c)\le 1$.
Then $A\setminus D$ meets $\{a_i,a'_i\}$ for all but one  $a_i\le a'_i$ for which $a_i+a'_i = c$.
In particular, it contains some element $S_c\in\S_c$ as a subset. Let $b\in B_n$ satisfy $\pi(b) = (c,S_c)$. Then $r_D(b) = 0$. 

Therefore, for all sufficiently large $c\in C$, $r_D(c) \ge 2$. Delete any single element of $D$ to form $D'$. 
Deleting a single element from $D$ can at most reduce $r_D(c)$ by $1$ because $r$ counts representations $a+b$ with $a\le b$.
Then $r_{D'}(c)\ge 1$ for all $c\in C$. We claim $D'$ is itself a basis. It suffices to show that 
every element $a\in A$ is a summand of finitely many elements of the complement of $C$.
By Lemma~\ref{BC}, it is true with probability $1$ that $B\cup C$ is cofinite in $\N$. Thus, every sufficiently large integer $b$ not in $C$
is in $B_n$ for some $n$.  By Lemma~\ref{no interference}, if $b$ is sufficiently large, every summand either lies in either $S_c$ or $b-S_c$ for $\pi(b)=(c,S_c)$.
Since $S_c\subset [X_{n-10}/4,X_{n-9})$, every element of $S_c$ is $\ge X_{n-10}/4$ and every element of $b-S_c\ge X_{n+1}/6-X_{n-9}$.
For fixed $a$, neither of these bounds holds for $n$ sufficiently large.

Thus $D'$ is again a basis, so $D$ is not a minimal basis.

\end{document}